\newtheorem{thm}{Theorem}
\newtheorem{ob}[thm]{Observation}
\newtheorem{conj}{Conjecture}
\newcommand{\gcol}{{\rm gcol}}
\begin{document}
\title{Total game coloring of graphs}
\author{T. Bartnicki}
\address{Faculty of Mathematics, Computer Science, and Econometrics,
University of Zielona G\'{o}ra, 65-516 Zielona G\'{o}ra, Poland}
\email{T.Bartnicki@wmie.uz.zgora.pl }
\author{Z. Miechowicz}
\address{Faculty of Mathematics, Computer Science, and Econometrics,
University of Zielona G\'{o}ra, 65-516 Zielona G\'{o}ra, Poland}
\email{z.miechowicz@wmie.uz.zgora.pl}

\begin{abstract}
Total variant of well known graph coloring game is considered. We determine exact values of $\chi''_{g}$ for some classes of graphs and show that total game chromatic number is bounded from above by $\Delta+3\Delta^+$. We also show relation between total game coloring number and game coloring index.
\end{abstract}

\maketitle

\section{Introduction}

Graph coloring game was first introduced in 1981 \cite{gar} by Steven Brams, but only after ten years graph game parameters were first time considered in mathematical paper \cite{bod} by Bodlaender. Various variants of graph coloring game have been studied until now. Profoundly explored is the original version of the game, in which players alternately color vertices of given graph (see survey \cite{bagr-06}), but there is also a lot of results about game chromatic index, the parameter connected with game in which players color edges of a graph (\cite{Anders},\cite{BG},\cite{fkkt}). Natural extension of this research is to consider a game in which players color both, vertices and edges of a graph.

Let a graph $G$ and a set of colors $C$ be given. Total graph coloring game is a game with the following rules:

\begin{itemize}
   \item Two players, Alice and Bob alternately color vertices and edges of $G$, using colors from $C$ with Alice playing first,
   \item both players have to respect proper coloring rule: in each moment of the game each pair of incident objects in graph $G$ (i.e. vertex-vertex, vertex-edge, edge-edge) have to receive different colors,
   \item Alice wins if whole graph has been colored properly with available colors and Bob otherwise.
\end{itemize}

With this game we can relate a natural parameter. The {\it total game chromatic number}, denoted $\chi''_{g}$, is the least number of colors in $C$ for which Alice has a winning strategy.

In section 2. we present some basic properties of $\chi''_{g}$ and exact results. In section 3. we give a strategy for Alice to win the game with $\Delta+3\Delta^+$ colors. To show the strategy we define total game coloring number and show a strategy for Alice in related marking game. We also show that $\gcol'(G)\leq \gcol''(G)$.

\section{Basic observations}

We start with trivial bounds for the total game chromatic number and give two simple examples, showing that the  total game chromatic number can
be larger than the usual  total chromatic number, and that $\chi''_{g}$ is not monotone on taking subgraphs.

Upper bound for $\chi''_{g}$ is given by counting neighbors of edges and vertices. Since every vertex, or edge in a graph $G$ have at most $2\Delta(G)$ incident objects we have the following inequality:

\begin{ob}
$$\chi''_{g}(G)\leq 2\Delta(G)+1$$
\end{ob}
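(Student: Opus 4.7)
The plan is to show that with $2\Delta(G)+1$ colors, no legal move is ever blocked, so Alice (and indeed any player) can always color the next object, and the game cannot terminate with an uncolored object. Consequently Alice wins regardless of strategy.

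First I would make the incidence count precise. A vertex $v$ is incident (in the total-coloring sense) to its $\deg(v)$ neighbours and its $\deg(v)$ incident edges, so to at most $2\Delta(G)$ objects. An edge $e=uv$ is incident to its two endpoints and to the $(\deg(u)-1)+(\deg(v)-1)$ edges sharing an endpoint with it, giving at most $2+2\Delta(G)-2=2\Delta(G)$ incident objects. Thus every object of $G$ (vertex or edge) has at most $2\Delta(G)$ incident objects.

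Next I would apply a pigeonhole argument at every stage of the game. At any point during the game, when the current player must color a still-uncolored object $x$, the colors forbidden on $x$ are exactly the colors already assigned to the incident objects of $x$; by the count above there are at most $2\Delta(G)$ such forbidden colors. Since $|C|=2\Delta(G)+1$, at least one color remains available for $x$, so the move can be carried out legally.

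Finally I would conclude: since neither player can ever be stuck, the game proceeds until every vertex and every edge has been properly colored. By definition Alice wins, hence $\chi''_g(G)\le 2\Delta(G)+1$. The only subtle point is being careful that the ``incident objects'' count includes all three types of incidences (vertex--vertex, vertex--edge, edge--edge), but once that is spelled out the argument is a straightforward pigeonhole and no strategic reasoning is needed.
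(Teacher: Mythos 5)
Your proof is correct and follows exactly the paper's (very terse) justification: every object of $G$ has at most $2\Delta(G)$ incident objects, so with $2\Delta(G)+1$ colors a greedy/pigeonhole argument shows no move can ever be blocked and Alice wins regardless of strategy. You simply spell out the incidence count for vertices and for edges that the paper states in one sentence; nothing further is needed.
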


Typical property of this type graph game parameters is, that they are not monotone on taking subgraphs. It is true also for total game chromatic number. We can see it analyzing these two examples.

\begin{ob}
$$\chi''_{g}(K_3)=5$$
\end{ob}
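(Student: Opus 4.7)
The upper bound $\chi''_g(K_3)\le 5$ is immediate from Observation~1, since $\Delta(K_3)=2$, so the entire task is the matching lower bound: I need to exhibit a strategy for Bob that defeats Alice whenever only $4$ colors are available.

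My first step is a structural reformulation. The total coloring game on $K_3$ coincides with the vertex coloring game on the total graph $T(K_3)$, and $T(K_3)$ is the complete tripartite graph $K_{2,2,2}$, whose three parts are the ``opposite'' pairs $P_i=\{v_i,e_{jk}\}$ for $\{i,j,k\}=\{1,2,3\}$: each object is adjacent to all others except its partner. This turns a mixed vertex/edge game into a clean vertex game and supplies two strong symmetries, namely the vertex-transitivity of $T(K_3)$ and a wreath product $S_2\wr S_2$ that fixes any chosen part setwise and acts transitively on the remaining four vertices.

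Using these together with color symmetry, I may assume that Alice opens with $v_1:=1$. Bob's response will be to color the partner $e_{23}:=2$. Each of the four remaining objects $v_2,v_3,e_{12},e_{13}$ is adjacent to both $v_1$ and $e_{23}$, so must be colored from $\{3,4\}$. By the four-fold object symmetry just noted combined with the swap $3\leftrightarrow 4$, I may further assume Alice's second move is $v_2:=3$. Bob then plays $e_{13}:=4$, which is legal because $e_{13}$ and $v_2$ are partners and hence non-adjacent in $T(K_3)$. At this point $v_3$ and $e_{12}$ are each adjacent to all four colored objects $v_1,v_2,e_{13},e_{23}$, whose colors exhaust $\{1,2,3,4\}$, so neither can be legally colored and Bob has won.

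The only step requiring genuine care is the reduction to the single case $v_2:=3$ for Alice's second move: I would verify explicitly that the stabilizer of $\{v_1,e_{23}\}$ in $\mathrm{Aut}(T(K_3))$ is transitive on $\{v_2,v_3,e_{12},e_{13}\}$, which is what collapses the otherwise eight-case analysis into the one computation above. Once that symmetry is recorded, the strategy gives $\chi''_g(K_3)\ge 5$ and hence equality.
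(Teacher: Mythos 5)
Your proof is correct and uses essentially the same strategy as the paper: Bob answers each Alice move by coloring the unique non-incident (antipodal) object with a fresh color, forcing the last antipodal pair to see all four colors. Your reformulation via $T(K_3)\cong K_{2,2,2}$ and the explicit symmetry reduction are a clean packaging of the same case analysis the paper handles informally.
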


\begin{proof}
From the trivial upper bound we know that $5$ colors is enough for Alice to win the game on $K_3$. We will show the strategy for Bob in the game with $4$ colors.

In her first move Alice have to color one of the vertices or edges using first color. Bob can answer by coloring unique object in $K_3$, which is nonincident with the one chosen earlier by Alice, using second color. Now Alice have to use third color, and Bob answers in the same way- he colors the only object nonincident with the one colored by Alice in her last move using the last available color. Now we have only one uncolored edge, and one uncolored vertex left and they both have neighbors in each of available colors.
\end{proof}

\begin{ob}
$$\chi''_{g}(K_3\cup K_1)=3$$
\end{ob}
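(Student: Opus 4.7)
The lower bound $\chi''_g(K_3 \cup K_1) \ge 3$ is immediate from $\chi''(K_3) = 3$. For the upper bound, I will give Alice a winning strategy with only three colors, exploiting the isolated vertex as a ``pass'' move: Alice opens by coloring the vertex of $K_1$ with an arbitrary color. Since that vertex is adjacent to nothing, this move places no constraint on the rest of the game, and in effect Alice now plays on $K_3$ as the second player.

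Label $V(K_3) = \{v_1,v_2,v_3\}$ and $E(K_3) = \{e_{12}, e_{13}, e_{23}\}$; the total graph $T(K_3)$ is isomorphic to $K_{2,2,2}$, and its three pairs of non-adjacent elements are
\begin{equation*}
P_1 = \{v_1, e_{23}\}, \quad P_2 = \{v_2, e_{13}\}, \quad P_3 = \{v_3, e_{12}\}.
\end{equation*}
These pairs are precisely the color classes of the (essentially unique) proper total $3$-coloring of $K_3$. Alice will use the \emph{pairing strategy}: whenever Bob colors one element of a pair $P_i$ with color $c$, Alice responds by coloring the other element of $P_i$ with the same color $c$.

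To verify that this works, I would run a short induction on rounds, maintaining the invariant that after each Alice move the colored part of $K_3$ consists of complete monochromatic pairs in pairwise distinct colors. The central observation is that the two elements of any $P_i$ have identical outside-$P_i$ neighborhoods in $T(K_3) \cong K_{2,2,2}$, namely the four elements of the other two pairs. Hence any color legal for Bob on his chosen element is also legal for Alice on its partner. Moreover, Bob is forced each round to introduce a color new to the game, since his element is adjacent to every element of the already-completed pairs; so in the final round only one color remains available, and the last pair is necessarily completed in that color. The only real obstacle is this invariant check, which is routine given the structure of $K_{2,2,2}$.
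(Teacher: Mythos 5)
Your proof is correct and follows essentially the same approach as the paper: Alice uses the isolated vertex as an opening ``pass'' and then pairs each element of $K_3$ with its unique non-incident partner, copying Bob's color. Your write-up just makes the pairing structure (the three color classes of $T(K_3)\cong K_{2,2,2}$) and the invariant explicit, which the paper leaves implicit.
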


\begin{proof}
We know that $\chi''(K_3\cup K_1)=3$. We will show the strategy for Alice in the game on $K_3\cup K_1$ with $3$ colors.

In her first move Alice colors isolated vertex with any color. Now Bob has to start game on $K_3$ and Alice have good answer for every Bob's move. She colors the only object nonincident with the one colored by Bob in his last move with the same color, so the whole graph can be colored properly with $3$ colors.
\end{proof}

This anomaly causes, that induction methods can not be used to explore total graph coloring game. But we can apply well known activation strategy.

\section{Total game coloring number}

A very useful tool in finding upper bounds for game parameters is the activation strategy. To use it we define auxilary game - total marking game. Let $k\in N$ and a graph $G$ be given. {\it Total marking game} with parameter $k$ is a game with following rules:
\begin{itemize}
   \item Two players Alice and Bob alternately mark vertices or edges of $G$ with Alice playing first,
   \item Alice wins if, in each part of the game each unmarked vertex and each unmarked edge in $G$ have at most $k-1$ marked neighbors, otherwise Bob wins.
\end{itemize}

We can define parameter concerned with that game. {\it Total game coloring number} of graph $G$, denoted $\gcol''(G)$, is the least $k$, such that Alice has a winning strategy in total marking game on $G$ with parameter $k$.

It is easy to see, that if Alice has a winning strategy in total marking game on $G$ with parameter $k$, she has also a winning strategy in total coloring game on $G$ with a set of $k$ colors. So, for every graph $G$, we have $\chi_g''(G)\leq \gcol''(G)$.

Analogous marking game was defined for edge coloring and the parameter related with it is game chromatic index denoted by $\gcol'(G)$. An expected inequality, that connects $\gcol'(G)$ and $\gcol''(G)$, holds.

\begin{thm}
Let G be a graph. Then $\gcol'(G)\leq \gcol''(G)$.
\end{thm}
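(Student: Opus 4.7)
The plan is to convert a winning strategy $S$ for Alice in the total marking game on $G$ with parameter $k = \gcol''(G)$ into a winning strategy for her in the edge marking game on $G$ with the same parameter. Alice will play $S$ inside an internal simulation of the total marking game, run in parallel with the real edge marking game. Every edge Bob marks in the real game is forwarded to the simulation as his virtual move. On each of Alice's real turns, she consults $S$: if $S$ advises an edge $g$, she marks $g$ in both games; if $S$ advises a vertex $v$, she marks $v$ in the simulation only, then injects a phantom Bob move (marking any still-unmarked vertex) into the simulation and consults $S$ again, iterating until $S$ eventually advises an edge, which she then plays in the real game. So long as the phantom Bob only marks vertices, the sets of marked edges in the two games remain identical.

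The key obstacle is the moment when every vertex of $G$ is already marked in the simulation: the phantom Bob is then forced to mark an edge $e^{*}$, creating a temporary discrepancy between the simulated and the real edge-sets. I would absorb this with a queue $Q$ of edges marked in the simulation but not yet in the real game, and arrange the simulation so that the invariant $|Q|\leq 1$ is maintained. Concretely, on each subsequent real turn Alice first plays the queued edge (emptying $Q$) and then advances the simulation by one further Alice move from $S$ (refilling $Q$ with a single new pending edge), keeping the simulated turn synchronised with the real turn at ``Bob to move''.

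The winning claim then follows by inspecting any unmarked real edge $e$ at any moment. If $e\notin Q$ then $e$ is also unmarked in the simulation, so $S$'s invariant gives it at most $k-1$ marked simulation neighbors (vertices plus edges), and the marked real edge-neighbors of $e$ are a subset of those. If $e\in Q$, then $e$ was added to the queue at a point when all vertices of $G$ were already marked, so $S$'s bound at that instant charges $2$ to the marked vertex-endpoints of $e$ and leaves at most $k-3$ marked edge-neighbors of $e$; since $e$ spends at most one real round in $Q$, at most one Bob move can contribute one further neighbor, yielding $k-2\leq k-1$. Thus Alice wins the edge marking game with $k$ colors, establishing $\gcol'(G)\leq \gcol''(G)$.
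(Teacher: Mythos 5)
Your argument runs in the opposite direction from the paper's and is genuinely different in structure: you transfer Alice's winning strategy from the total marking game down to the edge marking game by direct simulation, whereas the paper argues the contrapositive, lifting a winning strategy of Bob from the edge marking game to the total marking game by identifying the objects of $G$ with the vertices of $L(G)\cup nK_1$ and invoking monotonicity of the ordinary marking game under subgraphs. The contrapositive direction is the easier one precisely because Bob only has to force a single violation and extra marked objects can only help him, so no synchronisation bookkeeping is needed; your direction has to manage the vertex moves and the lag between the two games, which is where the delicacy lies. Your core observations are sound: the padded play (real Bob's edges plus a phantom Bob who marks vertices) is a legitimate play of the total marking game, so the invariant of $S$ applies to it; marked real edges stay inside marked simulation edges; and an edge can only enter the queue $Q$ after all vertices are marked, so its two marked endpoints absorb $2$ of the budget $k-1$ and leave room for the extra neighbors it accumulates while queued.

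There is, however, one step that fails as written: ``every edge Bob marks in the real game is forwarded to the simulation as his virtual move'' is impossible when Bob marks the edge currently sitting in $Q$, since that edge is already marked in the simulation. At that point the simulation either falls out of turn-synchronisation or receives two consecutive Alice moves, and the guarantee of $S$ no longer applies verbatim. The repair is routine --- let the virtual Bob mark some other unmarked simulation edge $h$, which then replaces the old element of $Q$, and check that $h$ satisfies the same $k-3$ accounting --- but it must be said, because otherwise the strategy is undefined in a reachable position. A second, smaller slip: for the edge $e^{*}$ that the phantom Bob is forced to mark, Alice's own edge move later in that same real turn may also be a neighbor of $e^{*}$, so the correct count is $(k-3)+1+1=k-1$ rather than $k-2$; the conclusion survives, but only just, and the arithmetic should reflect that the bound is tight. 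With these two points addressed, your proof is correct and gives an alternative, self-contained route that avoids the line graph and the external monotonicity result the paper relies on.
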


\begin{proof}
Let $G$ be a graph with the set of vertices $V=\{v_1,...,v_n\}$ and edges $E=\{e_1,...,e_m\}$. Assume that Bob has a winning strategy in edge marking game with parameter $k$ on $G$. We will show the winning strategy for Bob in total marking game with the same parameter on $G$.

If Bob can win in edge marking game on $G$, he can also win marking game on linegraph $L(G)$ with the same parameter. Hence this game is monotone on taking subgraphs \cite{wuzh-03} Bob can also win marking game with the same parameter on $L(G)\cup nK_1$, where $nK_n$ is $n$ isolated vertices. Bob plays total marking game according to the wining strategy at marking game on  $L(G)\cup nK_1$. Bob, before the game, will mark every vertex of $L(G)$ as corresponding edge in $G$, and remaining vertices of $L(G)\cup nK_1$ with $v_1,...,v_n$. His strategy is the following. Bob will simulate every Alice's move on $L(G)\cup nK_1$ by marking object with the same label, as she chose in $G$. His winning strategy on  $L(G)\cup nK_1$ make him chose vertex $v_i$, or edge $e_j$, and in his move in a game on $G$ he will chose an object with exactly the same label. Hence he has a winning strategy in marking game on $L(G)\cup nK_1$, there must be a moment in the game, when some of the vertices of $L(G)$, say $e_i$, has more than $k-1$ marked neighbors. But in the same moment the edge $e_i$ in $G$ has at least $k$ marked neighbors in $G$.
\end{proof}

Modification of the activation strategy from edge marking game \cite{BG} will give a bound of $\gcol''(G)$ in terms of $\Delta$ and maximum outdegree of graph. For a vertex $x$ of directed graph $D$ we denote the set of its out-edges by $E^+(x)$ and the set of its in-edges by $E^-(x)$.

\begin{thm}
Let $G$ be a graph which can be oriented in such a way, that it's out degree is at most $k$.
Then $\gcol''(G)\leq \Delta (G)+3k+1$.
\end{thm}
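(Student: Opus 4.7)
The plan is to adapt the activation strategy of \cite{BG}, originally used for the edge marking game, to the total marking game. Fix an orientation $D$ of $G$ in which every vertex has out-degree at most $k$; write $N^+(x)$ and $N^-(x)$ for the heads of $E^+(x)$ and the tails of $E^-(x)$, respectively. Define an auxiliary orientation on the total graph $T(G)$ (whose objects are $V(G) \cup E(G)$, with the usual incidences) as follows: from a vertex $v$, direct arcs to each vertex in $N^+(v)$ and to each edge in $E^+(v)$, giving at most $2k$ out-arcs; from an edge $e$ with tail $u$ and head $v$ in $D$, direct arcs to $v$, to each edge in $E^+(v)$, and to each edge in $E^+(u) \setminus \{e\}$, again giving at most $1 + k + (k-1) = 2k$ out-arcs.

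Alice plays the standard activation strategy: she maintains a growing set $A$ of \emph{active} objects, with the invariant that every marked object lies in $A$. On each of her turns after Bob has just marked some object $y$, she first activates $y$ together with all its out-neighbors in the auxiliary orientation (at most $2k+1$ new activations), and then marks some unmarked active object, giving priority to out-neighbors of $y$. For her opening move she simply activates and marks an arbitrary object.

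The crucial invariant to verify is that, at every moment, each unmarked object $x$ has at most $\Delta + 3k$ marked neighbors in $T(G)$. I would split this count into (i) marked out-neighbors of $x$ in the auxiliary orientation, of which there are at most $2k$ by construction; and (ii) marked in-neighbors of $x$, which I would bound by $\Delta + k$ via an amortized charging argument against Bob's moves and Alice's forward activations. Summing these yields the target $\Delta + 3k$, so with parameter $\Delta + 3k + 1$ every unmarked object has at least one free color at all times, and Alice wins the total marking game.

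The main obstacle is the case analysis in part (ii): because Bob's single move simultaneously forward-activates both vertex- and edge-neighbors, the amortized argument must be reorganized relative to the edge-only setting of \cite{BG}. A key sub-step is to show that at each of Bob's moves Alice's priority rule lets her consume an active unmarked object close to Bob, preventing accumulation of unpaid activations; the four pairings of $x$ with a candidate in-neighbor by object-type (vertex-vertex, vertex-edge, edge-vertex, edge-edge) each need to be handled in turn. Alice's opening move, and the boundary situations where no priority target is available for her, require separate but routine checks.
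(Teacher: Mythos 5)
There is a genuine gap: the entire difficulty of the theorem sits in your step (ii), and you have not proved it --- you only name a target bound of $\Delta+k$ for the marked in-neighbors and defer the ``amortized charging argument'' as the ``main obstacle.'' The out-neighbor count ($\le 2k$) is immediate from your auxiliary orientation and carries no content; what must be shown is why Bob cannot simply mark many in-neighbors of a fixed unmarked object. For an unmarked edge $e=\vec{xy}$ the non-out-neighbors in $T(G)$ number up to roughly $2\Delta-2k$ (all edges at $y$ other than those in $E^+(y)$, all in-edges of $x$, and the vertex $x$), so the claimed bound $\Delta+k$ requires a real mechanism in Alice's strategy that limits how many in-edges of $x$ ever get marked while $e$ is still unmarked. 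Your strategy (bulk-activate Bob's object together with all $\le 2k$ of its out-neighbors, then mark one active object ``with priority to out-neighbors'') does not visibly supply such a mechanism, and the vaguely specified priority rule can be pulled in conflicting directions when Bob attacks in-edges of several vertices at once. Until that charging argument is written out and checked against the four object-type pairings you list, the proof is not complete.

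For comparison, the paper's proof uses a different activation discipline that makes this count essentially trivial: Alice ``jumps'' along a single directed walk in $D$ (edge $\vec{xy}$ $\to$ head $y$ $\to$ an unmarked out-edge of $y$ $\to \dots$), activating one object per jump and marking only when she lands on an already active object. Because activation of an in-edge of a vertex $x$ can only happen via a jump that is immediately followed by a jump into $x$ or into an out-edge of $x$, each marked out-edge of $x$ absorbs at most two activated in-edges, and the count closes at $\Delta+3\Delta^+$ for edges and $\Delta+k+2$ for vertices. Note also that the two object types give genuinely different bounds there (edges are the tight case); your uniform split $2k+(\Delta+k)$ ignores that asymmetry. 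If you want to salvage your version, you need either to prove your step (ii) in full, or to replace the bulk activation by a sequential rule, as in the paper, under which every activation of an in-neighbor is explicitly paid for by a marked out-neighbor.
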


\begin{proof}
Let $D$ be a directed graph obtained by orienting the edges of $G$ such that $%
\Delta ^{+}(D)\leq k$. We shall describe a strategy for Alice
guaranteeing that at any moment of the game, each unmarked
edge or vertex has at most $\Delta (G)+3k$ marked neighbors, so Alice can always win the marking game on $G$ with parameter $\Delta (G)+3k+1$ .

During the game Alice (and only she) will activate vertices and edges of $D$. The edge, or the vertex once activated will be active till the end of the game. Bob can see which edges or vertices are active, but
this information does not help him. Before Alice marks an edge or vertex she will ,,jump'' on the edges and vertices of $D$ and activate it according to the following rules:

\begin{enumerate}
	\item Alice always jumps according to the orientation of $D$,
	\item in her first move Alice starts jumping from any edge,
	\item after Bob's move Alice jumps to the object he has just marked, activates it and starts her jumping procedure from it,
	\item from an oriented edge $\vec{xy}$ Alice jumps to $y$, unless $y$ is already marked. In that case Alice jumps to the unmarked edge from the set $E^+(y)$,
	\item from $x\in V(D)$ Alice jumps to any unmarked edge from the set $E^+(x)$,
	\item if Alice jumps to inactive object, she activates it. If Alice jumps to active object, she marks it and stops,
	\item if Alice can't make first jump, she starts jumping from any edge,
	\item if Alice can't make further jump, she marks last visited and activated object.
\end{enumerate}

Note that after Alice's move each marked object is also active. After Bob's move there is at most one more marked inactive object. For any unmarked object we will count the maximal number of active objects incident with it after Bob's move.

Let $x\in V(D)$ be an active and unmarked vertex. It is incident with at most $\Delta(G)$ vertices, and we assume, that they all can be active. Vertex $x$ has at most $k$ out-edges, and they all can be active. Only two among its in-edges can be active. One from which Alice jumped to $x$, and one just marked by Bob. So unmarked vertex $x$ can have at most $\Delta+ k+2$ active neighbors.

Let $\vec{xy}\in E(D)$ be unmarked and active edge. It has at most $\Delta -1$ incident edges, which are incident also with $y$, and they all can be active. Vertices $x$ and $y$ can be active as well. Vertex $x$ can have at most $\Delta^+ - 1$ active out-edges other than $\vec{xy}$. In the worst case all of them are marked, and to each Alice jumped twice from different in-edges of $x$, so $\vec{xy}$ has at most $2\left(\Delta^+-1\right)$ more active in-neighbors. If vertex $x$ is also marked there can be one more active in-edge of $x$ (it was activated because of the jump from one of already counted edges), and one more, from which Alice jumped to $\vec{xy}$. So $\vec{xy}$  can have at most $\Delta +3\Delta^+$ active neighbors, as claimed.
\end{proof}

Careful study of the proof and earlier observations let us formulate following conjecture.

\begin{conj}
Let G be a graph. Then $\gcol''(G)=\gcol'(G)+2$.
\end{conj}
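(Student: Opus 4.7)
The plan is to establish both inequalities in the conjectured equality. Theorem~1 already gives $\gcol''(G)\ge\gcol'(G)$, so the task reduces to sharpening the lower bound by $2$ and proving the matching upper bound.

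\emph{Upper bound $\gcol''(G)\le\gcol'(G)+2$.} The starting idea is to lift a winning Alice strategy $\sigma$ from the edge marking game with parameter $k=\gcol'(G)$ to the total marking game with parameter $k+2$. The cheap half of the count is the edge constraint: an unmarked edge $e=xy$ has only its two endpoints $x,y$ as extra vertex-neighbors beyond the incident edges controlled by $\sigma$, giving at most $(k-1)+2=k+1$ marked total-neighbors, within budget. The delicate half is the vertex constraint, because the number of marked edges incident to an unmarked vertex $v$ is not directly controlled by an edge-only strategy and can in principle grow as large as $\deg(v)$. The expected remedy is a hybrid activation argument in the style of Theorem~2: Alice translates each of Bob's vertex moves into a phantom edge move in her simulation of $\sigma$, and she preemptively marks a vertex $v$ as soon as its marked incident edges threaten to exceed $k+1$. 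I expect the main obstacle to be showing that these preemptive vertex marks do not cascade into violations at neighbouring vertices or edges, requiring a carefully coordinated activation schedule for both edge and vertex objects.

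\emph{Lower bound $\gcol''(G)\ge\gcol'(G)+2$.} Starting from a winning Bob strategy $\tau$ in the edge marking game with parameter $\gcol'(G)-1$, the plan is to have Bob play $\tau$ in the total marking game (via a simulation as in Theorem~1) and additionally spend two of his moves marking the endpoints of the eventual overflow edge. At the moment $\tau$ produces an unmarked edge $e=xy$ with $\gcol'(G)-1$ marked edge-neighbors, if $x$ and $y$ are also marked then $e$ has $\gcol'(G)+1$ marked total-neighbors, violating parameter $\gcol'(G)+1$ and handing Bob the win. The main obstacle, which I expect to dominate the proof, is that each vertex mark by Bob concedes a tempo to Alice in the simulated edge game; to preserve the win one must either identify vertex targets whose simulated Alice responses are useless in the edge game (for instance, vertices all of whose incident edges are already marked, so that the mirrored Alice move corresponds to a move in the $nK_1$ part of $L(G)\cup nK_1$), or show by a potential argument that at most two extra Alice tempi cannot overturn $\tau$.

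Since both directions intertwine the edge and vertex constraints, I do not expect a short resolution via black-box appeals to Theorems~1 and~2; rather, a full proof will likely require a unified activation framework that tracks edge-neighborhoods and vertex-neighborhoods simultaneously from the outset.
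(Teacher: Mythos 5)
The statement you are attempting is presented in the paper only as a conjecture: the authors offer no proof, so there is nothing to compare your argument against, and you should treat this as an open problem. What you have written is a research plan rather than a proof. In both directions the decisive step is explicitly flagged as an ``expected obstacle'' and then left unresolved, and in both cases that obstacle is precisely where the whole difficulty lives, so neither inequality is actually established.

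Concretely, for the upper bound $\gcol''(G)\le\gcol'(G)+2$: an edge-marking strategy $\sigma$ gives no control at all over how many marked objects surround an unmarked \emph{vertex} (which can have up to $2\Delta$ total neighbours), and your proposed remedy of ``preemptively marking a vertex once its incident marked edges threaten to exceed $k+1$'' is not an algorithm --- several vertices can cross the threshold in the same round while Alice has only one move, and Bob's vertex moves have no canonical image in the simulated edge game (the ``phantom edge move'' is undefined when the marked vertex still has unmarked incident edges). For the lower bound $\gcol''(G)\ge\gcol'(G)+2$, everything hinges on the claim that Bob may concede two free tempi to Alice in the edge game without losing; this follows from nothing in the paper and is false as a general principle for marking games, where a single extra Alice move can change the winner. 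Bob also does not know in advance which edge will overflow, so he cannot schedule the marking of its endpoints, and he must mark them before Alice simply marks that edge herself. Finally, note that the asserted identity fails for edgeless graphs, so even the statement needs a hypothesis such as $E(G)\neq\emptyset$. As it stands, the proposal proves neither direction.
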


\end{document}